 \let\mathscr\relax% just so we can load this and rsfs
\theoremstyle{definition}
\newtheorem{assumption}{Assumption}
\newtheorem{remark}{Remark}
\newtheorem{theorem}{Theorem}
\newtheorem{lemma}{Lemma}
\begin{document}
 
%%%%%%%%%%%%%%%%%%%%%%%%%%%%%%%%%%%%%%%%%%%%%%%%%%%%%%%%%%%%%%%%%%%%%%%%%%%
\lhead{}
\chead{}
\rhead{}
%\rhead{\today}

\centerline{
 \bf When an Approximate Model Suffices for Optimal Control
}
\vspace{4px}
%\centerline{
%\bf Optimal Decision Making
%}

\begin{tabular*}{\textwidth}{l@{\extracolsep{\fill}}r}
	\text{\centerline{Andreas A. Malikopoulos}}\vspace{5pt}\\ 
	\centerline{Cornell University}\\
		\centerline{amaliko@cornell.edu}
	%\centerline{URL: https://sites.udel.edu/engr-ssc/}
\end{tabular*}

\date{March 2024}

\begin{abstract}
	
	In this paper, we develop an optimal control framework for dynamical systems when only an approximate
	model of the underlying plant is available. We consider a setting in which the
	control strategy is synthesized using a model-based optimal control problem that
	includes a penalty term capturing deviation from the plant trajectory, while the
	same control input is applied to both the model and the actual system. For a
	general class of optimal control problems, we establish conditions under
	which the control minimizing the model-based Hamiltonian coincides with the
	plant-optimal control, despite mismatch between the model and the true dynamics.
	We further specialize these results to problems with quadratic control effort,
	where explicit and easily verifiable sufficient conditions guarantee equivalence
	and uniqueness of the resulting optimal control.
	These results show that accurate control synthesis does not require an exact
	model of the underlying system, but rather alignment of the optimality conditions
	that govern control selection. From a learning perspective, this suggests that
	data-driven efforts can focus on identifying regimes in which model-based and
	plant-based Hamiltonian minimizers coincide, thereby providing a theoretical
	basis for robust model-based decision making and the effective use of digital
	twins under modeling error. We provide examples to illustrate the theoretical
	findings and demonstrate equivalence of the resulting control trajectories even
	in the presence of significant model mismatch.

\end{abstract}

\section{Introduction} \label{sec:introduction}

In many control applications, an accurate model of the system dynamics is either unavailable or prohibitively expensive to obtain. Nevertheless, approximate models, often referred to as digital twins, are typically used to derive control strategies imposing limitations in optimality and implications on the robust operation of the system. This raises the following fundamental question: ``Under what conditions is an optimal control strategy derived from an approximate model guaranteed to remain optimal for the actual system?" Addressing this question requires enhancing our understanding of the structural properties of optimal control problems that render them insensitive to certain forms of model mismatch. This paper is motivated by the need for a principled explanation of when and why approximate models are sufficient for deriving optimal control strategies.

\subsection{Literature Review}

A large body of work in control theory has addressed uncertainty through adaptive control 
\cite{strm1989AdaptiveC,Ioannou2012RobustAC}. 
These approaches have been particularly effective for regulation and tracking problems, where stability and performance guarantees can be preserved despite parametric uncertainty and unmodeled dynamics 
\cite{Dydek2013AdaptiveCO,Leman2009L1AC}. 
Adaptive control frameworks typically rely on explicit model structures and update laws, enabling safety guarantees but often limiting flexibility in complex or data-rich environments.

Reinforcement learning (RL) has emerged as a data-driven paradigm for control and decision making, drawing on foundations in dynamic programming and approximate optimal control 
\cite{Bertsekas1996,Sutton1998a,Sutton:1992ub}. 
RL methods aim to synthesize control strategies directly from interaction with the system, without requiring an explicit model, and have been applied to a wide range of dynamical systems. 
However, classical RL formulations often lack safety guarantees during learning, motivating significant recent efforts on integrating learning with control-theoretic tools.

Several research directions have focused on safe learning, combining ideas from reachability analysis, robustness, and probabilistic modeling. 
Representative examples include approaches that merge robust invariant sets with Bayesian learning mechanisms 
\cite{Fisac:2019wb}, as well as methods that model unknown dynamics using Gaussian processes to iteratively approximate maximal safe sets from data 
\cite{Akametalu:2014th}. 
These approaches explicitly trade off exploration and safety, but typically rely on conservative assumptions or computationally demanding updates.

Iterative learning control (ILC) constitutes another important class of methods for systems operating under repetitive tasks 
\cite{Armstrong:2021vw,Liu:2014wb}. 
ILC schemes exploit repeated executions to refine input trajectories so that system outputs converge to a desired reference. 
Related efforts combine ILC with extremum seeking techniques to enable online performance optimization without explicit system identification 
\cite{Khong:2016ug,Khong:2016ws}. 
In networked and cyber-physical systems, where accurate models may only be available through data samples, learning-based strategies have also been developed for allocating limited resources, such as communication or power budgets, to optimize overall control performance 
\cite{Gatsis:2021tt}.

A significant amount of work has addressed sequential decision-making under uncertainty using both direct and indirect learning-based approaches. 
These include learning-based model predictive control frameworks 
\cite{Rosolia:2018wv,Zhang:2020wf} and their application to high-performance autonomous driving tasks 
\cite{Rosolia:2020uo}. 
Learning has also been applied in real time to capture and adapt to human behavior, for example, in vehicle powertrain control tailored to driver preferences 
\cite{Malikopoulos2009,Malikopoulos2009b,Malikopoulos2010a}. 
Additional applications include learning-based planning for autonomous vehicles 
\cite{You:2019va}, traffic control and coordination in simulation environments 
\cite{Vinitsky:2018vx}, transfer of learned strategies from simulation to scaled physical testbeds 
\cite{jang2019simulation,chalaki2020ICCA}, decentralized learning in stochastic games 
\cite{Arslan:2017vo}, learning for social routing and congestion games 
\cite{Krichene:2015vx}, and security-oriented learning methods for cyber-physical systems subject to replay attacks 
\cite{Sahoo:2020tx}.

Many learning-based control approaches rely on an idealized or nominal model—such as an assumed transition kernel—to compute strategies that are subsequently deployed on the actual system. 
The sensitivity of optimal control strategies to model mismatch, and the regularity properties of optimal solutions with respect to perturbations in the transition kernel, have been studied in \cite{Kara:2018vu}, providing insight into robustness limits of model-based planning. 
Related efforts have addressed approximate planning and learning in partially observed systems through the notion of an information state 
\cite{Subramanian2020ApproximateIS}, as well as the construction of approximate information states derived from data and coupled with approximate dynamic programming schemes 
\cite{Subramanian2019ApproximateIS}. 
These approaches offer a principled pathway for RL in partially observed settings. 
More recent work has also explored hybrid architectures that combine model reference adaptive control with reinforcement learning to generate online strategies 
\cite{Guha2021OnlinePF}.

In a closely related line of work, we developed a theoretical foundation for integrating learning and optimal control in systems with unknown dynamics. We developed a separation-based framework and shown that optimal control strategies can be synthesized offline using an available model and implemented online once an appropriately defined information state is learned from data \cite{Malikopoulos2022a,Malikopoulos2024} and demonstrated it in a lin-
ear–quadratic regulator problem \cite{kounatidis2025combined}.

In this paper, we explicitly accounting for model mismatch through penalized model-based optimal control formulations that capture deviation from the actual system. These results demonstrate that optimal control can often be achieved without exact model identification, provided that learning preserves the structural properties underlying equivalence between model-based and plant-based decision making.

Detailed overviews of RL formulations, algorithms, and applications can be found in the survey papers  \cite{Kiumarsi:2018tq,Recht2018ATO}.

\subsection{Contributions of This Paper}

Building on the results reported in \cite{Malikopoulos2022a,Malikopoulos2024}, in this paper, we focus on identifying conditions under which optimal control strategies derived from an approximate model remain optimal for the actual system, even in the presence of model mismatch.
In particular, in this paper, we establish an optimal control framework that explains when and why a digital twin or approximate model can be used to derive an optimal control strategy for an actual system whose dynamics are unknown or mismatched. The key contribution is a set of structural conditions under which the constrained Hamiltonian minimizers of the model-based and plant-based optimal control problems coincide, implying equivalence of the resulting optimal control trajectories despite differences in system dynamics. We introduce a penalized model-based cost and analyze the associated Hamiltonian systems to demonstrate that exact model fidelity is not a prerequisite for optimal decision making. The results provide a principled explanation for the success of model-based and learning-enabled control architectures in practice, clarify the role of control constraints and cost structure in enforcing strategy equivalence, and offer a theoretical foundation for using digital twins as reliable surrogates for control synthesis even in the presence of modeling error.

\subsection{Comparison With Related Work}

The distinguishing feature of this paper relative to existing learning-based and model-based control approaches is that it provides an optimal control characterization of when a control strategy synthesized using an approximate model is also optimal for the actual system, despite model mismatch. In contrast to adaptive control and RL methods, which typically rely on asymptotic convergence, online identification, or robustness margins to mitigate modeling errors, our approach explicitly embeds model mismatch into the cost function through a time-varying penalty term and studies its impact at the level of the Hamiltonian minimization. This allows us to establish conditions under which the optimal control trajectories of the model-based and plant-based problems coincide exactly over a finite horizon.

Existing work on robust and learning-based control often focuses on performance degradation bounds, stability guarantees, or safe exploration under uncertainty, without addressing whether the control strategies derived from a nominal or learned model are structurally identical to the one obtained from the true system. By contrast, the results in this paper show that, under mild regularity and convexity conditions, the presence of a mismatch penalty does not alter the minimizer of the Hamiltonian, thereby preserving optimality of the control input itself. This perspective differs fundamentally from approaches based on robust optimization, model predictive control, or approximate dynamic programming, where mismatch typically leads to conservative or modified strategies.

Finally, while recent work has explored digital twins and learned models as surrogates for control design, such approaches are often justified empirically or algorithmically. The contribution of this paper is to provide a first-principles, continuous-time optimal control explanation of when and why a digital twin can be safely used to synthesize an optimal control strategy for the actual system. As such, the paper complements existing learning-based and adaptive methods by offering structural guarantees that are independent of the learning algorithm and rely solely on properties of the underlying optimization problem.

\subsection{Organization of the Paper}

The remainder of the paper is organized as follows. 
In Section~2, we introduce the problem formulation, including the dynamics of the actual system and the available model, the admissible control set, and the original and penalized optimal control problems. 
In Section~3, we develop a Hamiltonian-based analysis and establish key structural properties of the model-based and plant-based optimal control formulations. 
In Section~4, we present the main equivalence results and characterize conditions under which the constrained Hamiltonian minimizers and, consequently, the optimal control trajectories, coincide despite model mismatch. 
In Section~5, we provide illustrative numerical examples that validate the theoretical results and highlight the role of control constraints and cost structure. 
Finally, in Section~6, we draw concluding remarks and discuss potential directions for future research.

\section{Problem Formulation} \label{sec:problem_formulation}

We consider a finite-horizon optimal control problem for a continuous-time dynamical system whose exact dynamics are unknown. Although the state of the actual system is fully observed, the lack of an explicit model prevents direct offline computation of an optimal control strategy. To address this challenge, we assume the availability of a known model (digital twin) of the system dynamics. This model is used in conjunction with a suitably modified cost functional to derive a control strategy that, under appropriate conditions, coincides with the optimal control for the actual system.

In our exposition, we restrict attention to deterministic systems, a single control input, and full state observation, which allows us to isolate the core ideas without introducing additional technical complexity.

\subsection{Modeling framework}

We consider that the system with a state space $\mathbb{R}^n$, and control space is $\mathbb{R}^m$, $n,m\in\mathbb{N},$ evolves over a fixed finite time horizon $T>0$. Let $\mathcal U \subset \mathbb{R}^m$ be a given set of admissible control values.
An admissible control is any measurable function
\[
u:[0,T]\to\mathcal U.
\]

The actual system (plant) evolves according to the dynamics
\begin{equation}
\dot{\hat x}(t) = \hat f\big(t,\hat x(t),u(t)\big), 
\qquad \hat x(0)=x_0,
\label{eq:plant}
\end{equation}
where $\hat x(t)\in\mathbb{R}^n$ is the plant state and $\hat f:[0,T]\times\mathbb{R}^n\times\mathbb{R}^m\to\mathbb{R}^n$ is an \emph{unknown} dynamics map.

We consider that the plant state $\hat x(t)$ is fully observed for all $t\in[0,T]$ along with standard regularity conditions (e.g., Carath\'eodory conditions) such that for any admissible control $u(\cdot)\in\mathcal U$, the system \eqref{eq:plant} admits a unique absolutely continuous solution.

In addition to the plant (actual system), we consider that we have access to a known model of the system dynamics, given by
\begin{equation}
\dot x(t) = f\big(t,x(t),u(t)\big),
\qquad x(0)=x_0,
\label{eq:model}
\end{equation}
where $x(t)\in\mathbb{R}^n$ is the model state and $f:[0,T]\times\mathbb{R}^n\times\mathbb{R}^m\to\mathbb{R}^n$ is known.

The model and the plant share the same initial condition and are driven by the same control input $u(\cdot)$. The model state $x(t)$ is available at all times.

\subsection{Original optimal control problem for the actual system}

Let $\ell:[0,T]\times\mathbb{R}^n\times\mathbb{R}^m\to\mathbb{R}$ be a running cost and let $\phi:\mathbb{R}^n\to\mathbb{R}$ be a terminal cost. Given an admissible control $u(\cdot)\in\mathcal U$, the performance of the actual system is evaluated by
\begin{equation}
J_{\mathrm{act}}\big(u(\cdot)\big)
=
\int_{0}^{T}\ell\big(t,\hat x(t),u(t)\big)\,dt
+
\phi\big(\hat x(T)\big),
\label{eq:actual_cost}
\end{equation}
where $\hat x(\cdot)$ is the plant trajectory generated by \eqref{eq:plant}.
The objective is to solve the following problem:
\begin{quote}
\textbf{Problem P1 (Plant-optimal control).}  
Minimize $J_{\mathrm{act}}\big(u(\cdot)\big)$ over $u(\cdot)\in\mathcal U$, subject to the plant dynamics \eqref{eq:plant}.
\end{quote}

Because the plant dynamics $\hat f$ are unknown, Problem~P1 cannot be solved directly using standard optimal control techniques.

\subsection{Model-based surrogate problem with penalized cost}

To overcome the lack of knowledge of $\hat f$, we construct a surrogate optimal control problem based on the known model dynamics \eqref{eq:model}. The key idea is to augment the running cost with a penalty term that quantifies the discrepancy between the model state and the observed plant state.

Let $\beta:[0,T]\to[0,\infty)$ be a given time-varying weighting function. For any admissible control $u(\cdot)\in\mathcal U$, define the model-based penalized cost
\begin{equation}
J_{\mathrm{mod}}\big(u(\cdot);\hat x(\cdot)\big)
=
\int_{0}^{T}
\Big(
\ell\big(t,x(t),u(t)\big)
+
\beta(t)\,\|x(t)-\hat x(t)\|^2
\Big)\,dt
+
\phi\big(x(T)\big),
\label{eq:model_cost}
\end{equation}
where $x(\cdot)$ is generated by the model dynamics \eqref{eq:model} under the control $u(\cdot)$, and $\hat x(\cdot)$ is the observed plant trajectory generated by \eqref{eq:plant}.

Then, we consider the following problem:
\begin{quote}
\textbf{Problem P2 (Model-based penalized control).}  
Minimize $J_{\mathrm{mod}}\big(u(\cdot);\hat x(\cdot)\big)$ over $u(\cdot)\in\mathcal U$, subject to the model dynamics \eqref{eq:model}.
\end{quote}

The penalty term $\beta(t)\|x(t)-\hat x(t)\|^2$ encourages alignment between the model and the plant trajectories while preserving the original performance objective. Although the penalty term influences the state and costate evolution of the model-based problem, it does not enter explicitly in the pointwise minimization of the Hamiltonian with respect to the control input, a fact that will play a central role in the equivalence results developed later.

The objective of this paper is to identify conditions under which the optimal control derived from Problem~P2 coincides with the optimal control of the original plant problem~P1, despite the mismatch between the true dynamics $\hat f$ and the available model $f$.

\section{Hamiltonian Analysis and Necessary Conditions} \label{sec:HANC}

In this section, we derive the Hamiltonian formulation and necessary optimality conditions for the two problems introduced in Section~2, i.e., the original optimal control problem for the actual system (Problem~P1) and the model-based penalized problem (Problem~P2). Throughout our exposition, we consider that the regularity conditions required for the application of Pontryagin’s Minimum Principle (PMP) are satisfied.

\subsection{Hamiltonian for the actual system}

Recall that the actual system evolves according to
\[
\dot{\hat x}(t)=\hat f\big(t,\hat x(t),u(t)\big), \qquad \hat x(0)=x_0,
\]
and that the corresponding cost functional is
\[
J_{\mathrm{act}}\big(u(\cdot)\big)
=
\int_{0}^{T}\ell\big(t,\hat x(t),u(t)\big)\,dt
+
\phi\big(\hat x(T)\big).
\]

We define the Hamiltonian associated with Problem~P1 as
\begin{equation}
\hat H\big(t,\hat x,u,\hat\lambda\big)
=
\ell\big(t,\hat x,u\big)
+
\hat\lambda^{\top}\hat f\big(t,\hat x,u\big),
\label{eq:plant_hamiltonian}
\end{equation}
where $\hat\lambda(t)\in\mathbb{R}^n$ is the costate associated with the plant state $\hat x(t)$.

Based on PMP, if $u^*(\cdot)\in\mathcal U$ is an optimal control for Problem~P1 with corresponding state trajectory $\hat x^*(\cdot)$, then there exists a continuous costate trajectory $\hat\lambda^*(\cdot)$ such that, for almost every $t\in[0,T]$,
\begin{align}
\dot{\hat x}^*(t) &= \hat f\big(t,\hat x^*(t),u^*(t)\big), \label{eq:plant_state}\\
\dot{\hat\lambda}^*(t)
&=
-\nabla_{\hat x}\ell\big(t,\hat x^*(t),u^*(t)\big)
-\big(\nabla_{\hat x}\hat f\big(t,\hat x^*(t),u^*(t)\big)\big)^{\top}\hat\lambda^*(t),
\label{eq:plant_costate}
\end{align}
with terminal condition
\begin{equation}
\hat\lambda^*(T)=\nabla_{\hat x}\phi\big(\hat x^*(T)\big).
\label{eq:plant_terminal}
\end{equation}

Moreover, the optimal control $u^*(t)$ satisfies the pointwise constrained minimization condition
\begin{equation}
u^*(t)\in \arg\min_{u\in\mathcal U}
H\bigl(t,x^*(t),u,\lambda^*(t)\bigr),
\qquad \text{for a.e. } t\in[0,T].
\label{eq:plant_minimization}
\end{equation}

\subsection{Hamiltonian for the model-based penalized problem}

We now consider the model-based surrogate problem (Problem~P2), where we use the model given by
\[
\dot x(t)=f\big(t,x(t),u(t)\big), \qquad x(0)=x_0,
\]
and the penalized cost functional 
\[
J_{\mathrm{mod}}\big(u(\cdot);\hat x(\cdot)\big)
=
\int_{0}^{T}
\Big(
\ell\big(t,x(t),u(t)\big)
+
\beta(t)\,\|x(t)-\hat x(t)\|^2
\Big)\,dt
+
\phi\big(x(T)\big).
\]

The Hamiltonian associated with Problem~P2 is 
\begin{equation}
H\big(t,x,\hat x,u,\lambda\big)
=
\ell\big(t,x,u\big)
+
\beta(t)\,\|x-\hat x\|^2
+
\lambda^{\top} f\big(t,x,u\big),
\label{eq:model_hamiltonian}
\end{equation}
where $\lambda(t)\in\mathbb{R}^n$ is the costate associated with the model state $x(t)$, and $\hat x(t)$ enters as a known exogenous signal.

If $u^\circ(\cdot)\in\mathcal U$ is an optimal control for Problem~P2 with corresponding state trajectory $x^\circ(\cdot)$, then there exists an absolutely continuous costate trajectory $\lambda^\circ(\cdot)$ such that, for almost every $t\in[0,T]$,
\begin{align}
\dot x^\circ(t) &= f\big(t,x^\circ(t),u^\circ(t)\big), \label{eq:model_state}\\
\dot\lambda^\circ(t)
&=
-\nabla_x\ell\big(t,x^\circ(t),u^\circ(t)\big)
-2\beta(t)\big(x^\circ(t)-\hat x(t)\big)
-\big(\nabla_x f\big(t,x^\circ(t),u^\circ(t)\big)\big)^{\top}\lambda^\circ(t),
\label{eq:model_costate}
\end{align}
with terminal condition
\begin{equation}
\lambda^\circ(T)=\nabla_x\phi\big(x^\circ(T)\big).
\label{eq:model_terminal}
\end{equation}

The optimal control $u^\circ(t)$ satisfies the pointwise constrained minimization condition
\begin{equation}
u^\circ(t)\in
\arg\min_{u\in\mathcal U}
H\big(t,x^\circ(t),\hat x(t),u,\lambda^\circ(t)\big),
\qquad \text{for a.e. } t\in[0,T].
\label{eq:model_minimization}
\end{equation}

\subsection{Notes}
\begin{enumerate}
\item The Hamiltonians \eqref{eq:plant_hamiltonian} and \eqref{eq:model_hamiltonian} differ only in the dynamics and in the presence of the penalty term $\beta(t)\|x-\hat x\|^2$. This penalty term does not depend explicitly on the control variable at a fixed time and therefore does not appear in the pointwise optimality condition with respect to $u$. Its influence on the optimal control is indirect, through its effect on the state and costate trajectories of the model-based problem.

\item In practice, the proposed framework is envisioned to operate by running the
actual system and the available model simultaneously under the same control
input. At each time instant, the plant state $\hat x(t)$ is measured online and
treated as an exogenous signal within the model-based optimal control problem.
In particular, the measured plant trajectory enters the model Hamiltonian
through the penalty term, without affecting the pointwise minimization with
respect to the control input. This allows the model to be used for control
synthesis even when its dynamics do not accurately predict the plant, as long as
the resulting optimality conditions governing control selection remain aligned.
As a result, the framework avoids explicit model identification and instead
leverages real-time plant information to preserve equivalence between model-based
and plant-based optimal control strategies.

\end{enumerate}

\subsection{Constrained Hamiltonian Minimization--Existence and Uniqueness}

Next, we provide conditions under which the pointwise Hamiltonian minimization problems that arise in \eqref{eq:plant_minimization} and \eqref{eq:model_minimization} admit minimizers.

\begin{assumption}
\label{ass:U_relaxed}
The admissible control set $\mathcal U\subset\mathbb R^m$ is nonempty, closed, and convex (not necessarily bounded).
\end{assumption}

Assumption~\ref{ass:U_relaxed} guarantees well-posedness of the pointwise Hamiltonian minimization problem and enables the use of projection-based optimality conditions. Intuitively, this assumption ensures that there is always at least one feasible control choice and that small perturbations of admissible controls remain admissible, so optimal controls can be characterized geometrically without pathological boundary behavior.

\begin{assumption}
\label{ass:convex_coercive}
For almost every $t\in[0,T]$ and for all relevant $(\hat x,\hat\lambda)$ and $(x,\hat x,\lambda)$, the maps
\[
u\mapsto \hat H(t,\hat x,u,\hat\lambda)
\quad\text{and}\quad
u\mapsto H(t,x,\hat x,u,\lambda)
\]
are proper, lower semicontinuous, and convex on $\mathcal U$.
Moreover, they are \emph{coercive on $\mathcal U$}, i.e.,
\[
\|u\|\to\infty,\ u\in\mathcal U
\quad\Longrightarrow\quad
\hat H(t,\hat x,u,\hat\lambda)\to+\infty
\ \text{and}\
H(t,x,\hat x,u,\lambda)\to+\infty.
\]
\end{assumption}

Assumption~\ref{ass:convex_coercive} guarantees existence and attainability of minimizers for the pointwise Hamiltonian minimization problem by enforcing proper lower semicontinuity and coerciveness with respect to the control input.
In simple terms, this assumption prevents minimizing sequences from escaping to infinity or failing to converge, ensuring that an optimal control can actually be realized rather than existing only as an infimum.

\begin{theorem}[Existence and uniqueness]
\label{thm:exist_unique_minimizers_relaxed}
Suppose Assumptions~\ref{ass:U_relaxed}--\ref{ass:convex_coercive} hold. Then, for almost every $t\in[0,T]$, the sets of minimizers
\[
\arg\min_{u\in\mathcal U}\hat H(t,\hat x,u,\hat\lambda)
\quad\text{and}\quad
\arg\min_{u\in\mathcal U} H(t,x,\hat x,u,\lambda)
\]
are nonempty, closed, and convex.

If, in addition, for almost every $t$ the Hamiltonians are \emph{strictly convex} in $u$ on $\mathcal U$ (e.g., $\alpha$-strongly convex), then these minimizers are unique almost everywhere.
\end{theorem}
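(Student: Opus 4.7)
The plan is to apply the direct method of the calculus of variations in finite dimensions, handling both Hamiltonians by a single generic argument since the structural hypotheses on the map $u\mapsto H(t,\cdot,u,\cdot)$ are identical. Fix $t$ in the full-measure set where Assumption~\ref{ass:convex_coercive} holds, freeze the remaining arguments, and abbreviate the resulting function on $\mathcal U$ by $\Phi:\mathcal U\to\mathbb R$. Since $\Phi$ is proper, the infimum $m^\star=\inf_{u\in\mathcal U}\Phi(u)$ satisfies $m^\star<+\infty$, and by coercivity $m^\star>-\infty$.

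Next I would establish existence of a minimizer by extracting a limit of a minimizing sequence. Take $\{u_k\}\subset\mathcal U$ with $\Phi(u_k)\downarrow m^\star$. Coercivity on $\mathcal U$ (Assumption~\ref{ass:convex_coercive}) forces $\{u_k\}$ to be bounded, for otherwise a subsequence $\|u_{k_j}\|\to\infty$ within $\mathcal U$ would give $\Phi(u_{k_j})\to+\infty$, contradicting $\Phi(u_k)\to m^\star<+\infty$. Since we are in $\mathbb R^m$, Bolzano--Weierstrass yields a convergent subsequence $u_{k_j}\to u^\star$, and $u^\star\in\mathcal U$ because $\mathcal U$ is closed (Assumption~\ref{ass:U_relaxed}). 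Lower semicontinuity then gives $\Phi(u^\star)\le\liminf_j\Phi(u_{k_j})=m^\star$, so $u^\star$ attains the minimum and $\arg\min_{\mathcal U}\Phi\neq\emptyset$.

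For the structural properties of the minimizer set, note that $\arg\min_{\mathcal U}\Phi=\{u\in\mathcal U:\Phi(u)\le m^\star\}$ is the intersection of the closed, convex set $\mathcal U$ with a sublevel set of $\Phi$; since $\Phi$ is lower semicontinuous the sublevel set is closed, and since $\Phi$ is convex on the convex set $\mathcal U$ the sublevel set is convex. Hence the argmin set is closed and convex. Applying this generic argument once with $\Phi(u)=\hat H(t,\hat x,u,\hat\lambda)$ and once with $\Phi(u)=H(t,x,\hat x,u,\lambda)$ yields the first conclusion of the theorem.

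For uniqueness under strict convexity, suppose two distinct minimizers $u_1\neq u_2$ exist at some such $t$. Then $v=\tfrac12(u_1+u_2)\in\mathcal U$ by convexity of $\mathcal U$, and strict convexity gives $\Phi(v)<\tfrac12\Phi(u_1)+\tfrac12\Phi(u_2)=m^\star$, contradicting the definition of $m^\star$. Thus the argmin is a singleton almost everywhere. The only real subtlety I anticipate is making the coercivity-to-boundedness step fully rigorous, since Assumption~\ref{ass:convex_coercive} is stated as a restricted directional condition along sequences remaining in $\mathcal U$; the contrapositive argument above handles this cleanly, so no serious obstacle is expected.
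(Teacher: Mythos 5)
Your proposal is correct and follows essentially the same route as the paper's proof: direct method via a bounded minimizing sequence (coercivity $\Rightarrow$ boundedness, Bolzano--Weierstrass, closedness of $\mathcal U$, lower semicontinuity), followed by convexity arguments for the structure of the argmin set and the standard midpoint contradiction for uniqueness under strict convexity. The only cosmetic differences are that the paper routes the argument through the extended-value function $\Psi = H + \iota_{\mathcal U}$ while you work directly on $\mathcal U$, and you obtain closedness and convexity of the argmin set by identifying it with a sublevel set rather than by separate sequence and convex-combination arguments.
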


\begin{proof}
We fix $t\in[0,T]$ for which the stated convexity and coercivity properties hold (this is assumed to be the case for almost every $t$). For clarity of exposition, we prove the claims for the model-based Hamiltonian only as the argument for the plant Hamiltonian is identical.

\medskip\noindent
We define the extended-value function
\[
\Psi(u) := H(t,x,\hat x,u,\lambda) + \iota_{\mathcal U}(u),
\]
where $\iota_{\mathcal U}$ is the indicator of $\mathcal U$, i.e.,
\[
\iota_{\mathcal U}(u)=
\begin{cases}
	0, & u\in\mathcal U,\\
	+\infty, & u\notin\mathcal U.
\end{cases}
\]
Then minimizing $H(t,x,\hat x,\cdot,\lambda)$ over $\mathcal U$ is equivalent to minimizing $\Psi$ over $\mathbb R^m$:
\[
\arg\min_{u\in\mathcal U} H(t,x,\hat x,u,\lambda)
=
\arg\min_{u\in\mathbb R^m}\Psi(u).
\]
By Assumption~\ref{ass:convex_coercive}, $u\mapsto H(t,x,\hat x,u,\lambda)$ is proper, lower semicontinuous, and convex on $\mathcal U$, and by Assumption~\ref{ass:U_relaxed} the set $\mathcal U$ is nonempty, closed, and convex. Hence $\Psi$ is a proper, lower semicontinuous, convex function on $\mathbb R^m$.

\medskip\noindent
Next, we show that $\Psi$ attains its minimum. By coercivity of $H$ on $\mathcal U$, we have
\[
\|u\|\to\infty,\ u\in\mathcal U \quad\Longrightarrow\quad H(t,x,\hat x,u,\lambda)\to +\infty,
\]
and thus also
\[
\|u\|\to\infty \quad\Longrightarrow\quad \Psi(u)\to +\infty,
\]
because $\Psi(u)=+\infty$ outside $\mathcal U$.

Let $m^\star:=\inf_{u\in\mathbb R^m}\Psi(u)=\inf_{u\in\mathcal U}H(t,x,\hat x,u,\lambda)$, and choose a minimizing sequence $\{u_k\}_{k\ge 1}\subset\mathbb R^m$ such that $\Psi(u_k)\downarrow m^\star$. Since $\Psi(u_k)<+\infty$ for all large $k$, we have $u_k\in\mathcal U$ for all large $k$. Coercivity implies that the sublevel set
\[
\mathcal L_c := \{u\in\mathbb R^m: \Psi(u)\le c\}
\]
is bounded for any finite $c$. In particular, for $c=\Psi(u_1)$, all sufficiently large $k$ satisfy $u_k\in\mathcal L_c$, hence $\{u_k\}$ is bounded.

\medskip\noindent
Therefore, by Bolzano--Weierstrass, there exists a convergent subsequence (not relabeled) such that $u_k\to \bar u$. Since $\mathcal U$ is closed and $u_k\in\mathcal U$ for all large $k$, it follows that $\bar u\in\mathcal U$.

By the lower semicontinuity of $\Psi$,
\[
\Psi(\bar u)\le \liminf_{k\to\infty}\Psi(u_k)=m^\star,
\]
which implies $\Psi(\bar u)=m^\star$. Hence $\bar u$ is a minimizer, and the argmin set is nonempty, i.e.,
\[
\arg\min_{u\in\mathcal U} H(t,x,\hat x,u,\lambda)\neq\emptyset.
\]

To show closedness of the argmin set, let $\mathcal M := \arg\min_{u\in\mathcal U} H(t,x,\hat x,u,\lambda)$. Since $\Psi$ is lower semicontinuous, $\mathcal M$ is closed, i.e., if $u_k\in\mathcal M$ and $u_k\to u$, then
\[
\Psi(u)\le \liminf_{k\to\infty}\Psi(u_k)=m^\star,
\]
so $u\in\mathcal M$.

%\medskip\noindent
To show convexity of $\mathcal M$, let $u_1,u_2\in\mathcal M$ and $\theta\in[0,1]$. Using convexity of $\Psi$,
\[
\Psi(\theta u_1+(1-\theta)u_2)
\le
\theta \Psi(u_1)+(1-\theta)\Psi(u_2)
=
\theta m^\star+(1-\theta)m^\star
=
m^\star,
\]
hence $\theta u_1+(1-\theta)u_2\in\mathcal M$. Thus $\mathcal M$ is convex.

To show uniqueness under strict convexity, we assume that $u\mapsto H(t,x,\hat x,u,\lambda)$ is strictly convex on $\mathcal U$. Suppose, for contradiction, that there exist two distinct minimizers $u_1\neq u_2$ in $\mathcal M$. For any $\theta\in(0,1)$, strict convexity implies
\[
H\big(t,x,\hat x,\theta u_1+(1-\theta)u_2,\lambda\big)
<
\theta H(t,x,\hat x,u_1,\lambda)+(1-\theta)H(t,x,\hat x,u_2,\lambda)
=
m^\star,
\]
which contradicts the definition of $m^\star$ as the infimum. Hence the minimizer is unique.

The proof for $\hat H(t,\hat x,\cdot,\hat\lambda)$ is identical under the corresponding convexity and coercivity assumptions, yielding nonemptiness, closedness, and convexity of its argmin set, and uniqueness under strict convexity.
This completes the proof.
\end{proof}

\section{Equivalence of Optimal Controls}

All equivalence results in this section are pointwise in time and rely on the structure of the instantaneous Hamiltonian minimization problems induced by the two optimal control formulations.
We provide two complementary equivalence results. The first is stated in a convex-analysis form (subdifferentials and normal cones) and accommodates nonsmooth costs, unbounded control sets, and nonlinear dynamics, provided the pointwise Hamiltonian minimization problems are convex. The second result specializes to a commonly used structural setting (quadratic control effort and mild growth conditions), which yields simple, verifiable conditions for existence, uniqueness, and equivalence.

\subsection{Preliminaries from convex analysis}

Let $\mathcal U\subset\mathbb R^m$ be nonempty, closed, and convex. The \emph{normal cone} to $\mathcal U$ at $u\in\mathcal U$ is defined by
\[
N_{\mathcal U}(u) := \{ \eta\in\mathbb R^m : \langle \eta, v-u\rangle \le 0,\ \forall v\in\mathcal U\}.
\]
For a proper, lower semicontinuous, convex function $\psi:\mathbb R^m\to\mathbb R\cup\{+\infty\}$, the convex subdifferential at $u$ is denoted by $\partial \psi(u)$.

We will use the standard fact that $u^*\in\arg\min_{u\in\mathcal U}\psi(u)$ if and only if
\[
0 \in \partial \psi(u^*) + N_{\mathcal U}(u^*).
\]

\begin{remark}[First-order optimality in variational inequality form]
\label{rem:VI_relaxed}
Under Assumption~\ref{ass:U_relaxed} and convexity of $u\mapsto H(t,\cdot)$, $u^*(t)$ minimizes $H(t,\cdot)$ over $\mathcal U$ if and only if
\[
0\in \partial_u H(t,u^*(t)) + N_{\mathcal U}(u^*(t)),
\]
where $\partial_u$ denotes the convex subdifferential and $N_{\mathcal U}$ is the normal cone to $\mathcal U$.
If $H$ is differentiable in $u$, this reduces to the variational inequality
\[
\langle \nabla_u H(t,u^*(t)),\, v-u^*(t)\rangle\ge 0,\qquad \forall v\in\mathcal U.
\]
\end{remark}

\subsection{General equivalence results}

To this end, for notational simplicity, we suppress the explicit dependence on time \(t\) when no ambiguity arises.

Recall the Hamiltonians from Section~3:
\begin{align*}
\hat H(t,\hat x,u,\hat\lambda) &= \ell(t,\hat x,u) + \hat\lambda^\top \hat f(t,\hat x,u),\\
H(t,x,\hat x,u,\lambda) &= \ell(t,x,u) + \beta(t)\|x-\hat x\|^2 + \lambda^\top f(t,x,u).
\end{align*}

\begin{theorem}
\label{thm:equiv_optionA}
Suppose Assumptions~\ref{ass:U_relaxed}--\ref{ass:convex_coercive} hold. Fix any time $t$ for which the pointwise Hamiltonian
minimizers exist, and suppress dependence on $(t,\cdot)$ for readability.
Suppose there exists $\bar u\in\mathcal U$ such that
\begin{equation}
	\partial_u \hat H(\hat x,\bar u,\hat\lambda)=\partial_u H(x,\hat x,\bar u,\lambda),
	\label{eq:grad_match}
\end{equation}
and, in addition,
\begin{equation}
	0\in \partial_u \hat H(\hat x,\bar u,\hat\lambda)+N_{\mathcal U}(\bar u).
	%\quad\text{(equivalently, } \exists\,\eta\in N_{\mathcal U}(\bar u)\text{ such that }-\eta\in\partial_u \hat H(\hat x,\bar u,\hat\lambda)\text{)}.
	\label{eq:normal_cone_cond}
\end{equation}
Then $\bar u$ is a minimizer for both pointwise Hamiltonian problems, i.e.,
\[
\bar u\in\arg\min_{v\in\mathcal U}\hat H(\hat x,v,\hat\lambda)
\quad\text{and}\quad
\bar u\in\arg\min_{v\in\mathcal U}H(x,\hat x,v,\lambda).
\]
If, moreover, each Hamiltonian is strictly convex in $u$ on $\mathcal U$
(for a.e.\ $t$), then the minimizers are unique and must coincide.
\end{theorem}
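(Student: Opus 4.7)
The plan is to translate the hypotheses into the convex-analytic first-order optimality condition recalled in Remark~\ref{rem:VI_relaxed}, namely that for a proper, lsc, convex function $\psi$ on $\mathcal U$, a point $u^\star\in\mathcal U$ minimizes $\psi$ over $\mathcal U$ iff $0\in\partial\psi(u^\star)+N_{\mathcal U}(u^\star)$. Under Assumptions~\ref{ass:U_relaxed}--\ref{ass:convex_coercive}, both Hamiltonians, viewed as functions of $u$, satisfy the hypotheses of this characterization. The proof then reduces to a clean transfer argument: the assumed normal-cone inclusion \eqref{eq:normal_cone_cond} says $\bar u$ is optimal for $\hat H(\hat x,\cdot,\hat\lambda)$, and the subdifferential matching \eqref{eq:grad_match} at $\bar u$ transports that inclusion verbatim to the model Hamiltonian $H(x,\hat x,\cdot,\lambda)$.

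The steps I would carry out are as follows. First, I would invoke Theorem~\ref{thm:exist_unique_minimizers_relaxed} to confirm that both pointwise minimization problems admit nonempty, closed, convex minimizer sets, so the question is well posed. Second, from \eqref{eq:normal_cone_cond} and the convex optimality characterization, I would conclude
\[
\bar u\in\arg\min_{v\in\mathcal U}\hat H(\hat x,v,\hat\lambda).
\]
Third, substituting \eqref{eq:grad_match} into \eqref{eq:normal_cone_cond} I would obtain
\[
0\in \partial_u H(x,\hat x,\bar u,\lambda)+N_{\mathcal U}(\bar u),
\]
and, because $u\mapsto H(x,\hat x,u,\lambda)$ is itself proper, lsc, and convex on $\mathcal U$ under Assumption~\ref{ass:convex_coercive}, the same characterization yields $\bar u\in\arg\min_{v\in\mathcal U} H(x,\hat x,v,\lambda)$. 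This already proves the first assertion.

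For the uniqueness claim, I would appeal to the strict-convexity clause of Theorem~\ref{thm:exist_unique_minimizers_relaxed}: if both $\hat H$ and $H$ are strictly convex in $u$ on $\mathcal U$ (for a.e.\ $t$), then each argmin is a singleton. Since $\bar u$ has already been shown to lie in both singletons, they must coincide. As a sanity check I might briefly restate the standard contradiction: two distinct minimizers of a strictly convex function on a convex set would make the midpoint strictly better, contradicting optimality; this is the same argument used in the proof of Theorem~\ref{thm:exist_unique_minimizers_relaxed} and need not be reproduced in detail.

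The main obstacle is essentially bookkeeping rather than analysis: one must be careful that the subdifferentials in \eqref{eq:grad_match} are interpreted as convex subdifferentials of the restricted maps $u\mapsto \hat H(\hat x,u,\hat\lambda)$ and $u\mapsto H(x,\hat x,u,\lambda)$ at the common point $\bar u$, so that the normal-cone inclusion can be transferred without invoking any additional regularity of $\ell$, $f$, or $\hat f$. Once this is clear, the argument is a direct application of the optimality characterization stated in Remark~\ref{rem:VI_relaxed}, and no nontrivial estimate is needed; the convexity and coercivity hypotheses do the heavy lifting through Theorem~\ref{thm:exist_unique_minimizers_relaxed}.
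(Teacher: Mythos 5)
Your proposal is correct, and it reaches the conclusion by the same underlying mechanism as the paper --- transferring the first-order optimality condition from one Hamiltonian to the other via the matching hypothesis \eqref{eq:grad_match} --- but it executes this at a different level of generality. The paper's proof assumes the Hamiltonians are \emph{differentiable} in $u$, phrases optimality as the variational inequality $\langle \nabla_u \Psi(u^\circ), v-u^\circ\rangle \ge 0$ for all $v\in\mathcal U$, and argues in the direction ``the model minimizer $u^\circ(t)$ is also a plant minimizer,'' tying the argument to the PMP trajectories $(x^\circ,\lambda^\circ)$ and $(\hat x^*,\hat\lambda^*)$. You instead work directly with the convex subdifferential and the normal-cone characterization $0\in\partial\psi(\bar u)+N_{\mathcal U}(\bar u)$ from Remark~\ref{rem:VI_relaxed}: hypothesis \eqref{eq:normal_cone_cond} certifies $\bar u$ as a minimizer of $\hat H$, and the set equality \eqref{eq:grad_match} transports the inclusion verbatim to $H$, after which Theorem~\ref{thm:exist_unique_minimizers_relaxed} supplies uniqueness under strict convexity. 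Your route is actually the more faithful one to the statement of Theorem~\ref{thm:equiv_optionA}, which is deliberately written with subdifferentials and normal cones precisely to accommodate nonsmooth costs; indeed, the paper's own Example~1 (with $\ell(u)=|u|$) is not covered by a proof that assumes differentiability, whereas it is covered by yours. What the paper's version buys in exchange is an explicit display of the gradient formulas \eqref{eq:grad_act}--\eqref{eq:grad_mod} showing that the penalty term drops out of $\nabla_u\Psi_{\mathrm{mod}}$, which is the structural point emphasized throughout; you may want to note in passing that the same cancellation holds at the subdifferential level since $\beta(t)\|x-\hat x\|^2$ is constant in $u$. The only care needed in your argument --- which you already flag --- is that the equivalence between minimality and the sum inclusion $0\in\partial\psi(\bar u)+N_{\mathcal U}(\bar u)$ implicitly uses the subdifferential sum rule for $\psi+\iota_{\mathcal U}$; the paper takes this as a ``standard fact'' in its preliminaries, so you are entitled to it here.
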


\begin{proof}
For brevity, define the time-$t$ objective functions
\begin{align}
	\Psi_{\mathrm{act}}(u;t)
	&:=\hat H\big(t,\hat x^*(t),u,\hat\lambda^*(t)\big), 
	\label{eq:Psi_act_def}\\
	\Psi_{\mathrm{mod}}(u;t)
	&:= H\big(t,x^\circ(t),\hat x^*(t),u,\lambda^\circ(t)\big).
	\label{eq:Psi_mod_def}
\end{align}
By Assumptions~\ref{ass:U_relaxed}--\ref{ass:convex_coercive}, for almost every
$t\in[0,T]$ the pointwise minimization problems defining
$\Psi_{\mathrm{act}}(\cdot;t)$ and $\Psi_{\mathrm{mod}}(\cdot;t)$ are well posed
(i.e., admit minimizers) on the nonempty closed convex set $\mathcal U$. 
Moreover, both maps
$u\mapsto\Psi_{\mathrm{act}}(u;t)$ and $u\mapsto\Psi_{\mathrm{mod}}(u;t)$ are
convex and differentiable with respect to $u$ on $\mathcal U$.

Since $\Psi_{\mathrm{act}}(\cdot;t)$ is convex and differentiable, a control
$u=u^*(t)$ minimizes $\Psi_{\mathrm{act}}(\cdot;t)$ over $\mathcal U$ if and only
if it satisfies the variational inequality
\begin{equation}
	\Big\langle \nabla_u \Psi_{\mathrm{act}}(u^*(t);t),\, v-u^*(t)\Big\rangle \ge 0,
	\qquad \forall v\in\mathcal U.
	\label{eq:VI_act}
\end{equation}
Similarly, $u=u^\circ(t)$ minimizes $\Psi_{\mathrm{mod}}(\cdot;t)$ over
$\mathcal U$ if and only if
\begin{equation}
	\Big\langle \nabla_u \Psi_{\mathrm{mod}}(u^\circ(t);t),\, v-u^\circ(t)\Big\rangle
	\ge 0,
	\qquad \forall v\in\mathcal U.
	\label{eq:VI_mod}
\end{equation}
These conditions are the standard first-order optimality conditions for convex
minimization over a closed convex set.

By the definition of the Hamiltonians and since the penalty term
$\beta(t)\|x-\hat x\|^2$ is independent of $u$ for fixed
$(t,x,\hat x)$, we obtain
\begin{align}
	\nabla_u \Psi_{\mathrm{act}}(u;t)
	&= \nabla_u \ell\big(t,\hat x^*(t),u\big)
	+ \nabla_u\!\Big(\hat\lambda^{*\top}(t)\,
	\hat f\big(t,\hat x^*(t),u\big)\Big),
	\label{eq:grad_act}\\
	\nabla_u \Psi_{\mathrm{mod}}(u;t)
	&= \nabla_u \ell\big(t,x^\circ(t),u\big)
	+ \nabla_u\!\Big(\lambda^{\circ\top}(t)\,
	f\big(t,x^\circ(t),u\big)\Big).
	\label{eq:grad_mod}
\end{align}
By the hypothesis of Theorem~2, the gradients of the plant and model Hamiltonians
with respect to $u$ coincide when evaluated at $u=u^\circ(t)$, namely,
\begin{equation}
	\nabla_u \Psi_{\mathrm{act}}(u^\circ(t);t)
	=
	\nabla_u \Psi_{\mathrm{mod}}(u^\circ(t);t).
	\label{eq:grad_equality_at_u_circ}
\end{equation}

\medskip\noindent
Substituting the gradient-matching condition into the variational inequality
characterizing optimality of $u^\circ(t)$ for the model-based problem yields
\begin{equation}
	\Big\langle \nabla_u \Psi_{\mathrm{act}}(u^\circ(t);t),\,
	v-u^\circ(t)\Big\rangle \ge 0,
	\qquad \forall v\in\mathcal U.
	\label{eq:VI_act_at_u_circ}
\end{equation}
Since $\Psi_{\mathrm{act}}(\cdot;t)$ is convex and differentiable on
$\mathcal U$, the condition \eqref{eq:VI_act_at_u_circ} is equivalent to
\[
u^\circ(t)\in \arg\min_{u\in\mathcal U} \Psi_{\mathrm{act}}(u;t)
=
\arg\min_{u\in\mathcal U}
\hat H\big(t,\hat x^*(t),u,\hat\lambda^*(t)\big).
\]
Thus, the control $u^\circ(t)$ obtained from the model-based problem is also a
minimizer of the plant Hamiltonian $\hat H(\cdot)$ at time $t$.
Since the above argument holds for any time $t$ at which the PMP conditions
apply, the conclusion follows for almost every $t\in[0,T]$.
Consequently,
\[
u^\circ(t)\in\arg\min_{u\in\mathcal U}\Psi_{\mathrm{act}}(u;t)
\quad \text{for a.e. } t\in[0,T].
\]

If each Hamiltonian is strictly convex in $u$, the $\arg\min$ set is a singleton, and therefore
\[
u^\circ(t) = u^*(t) \quad \text{for a.e. } t\in[0,T].
\]
\end{proof}

\begin{remark}
\label{rem:interpretA}
Theorem~\ref{thm:equiv_optionA} is purely pointwise in time. In the PMP setting, if along two candidate extremals the subgradient/normal-cone conditions align almost everywhere in time, then the resulting control trajectories coincide almost everywhere (with uniqueness guaranteed under strict convexity).
\end{remark}

\subsection{Specialization to quadratic control effort}

While Theorem~\ref{thm:equiv_optionA} provides a general equivalence result under convexity, its hypotheses may be abstract to verify directly. Next, we specialize to a quadratic control effort under which the equivalence becomes explicit and easily verifiable.

\begin{assumption}
\label{ass:struct_B}
The admissible control set $\mathcal U\subset\mathbb R^m$ is nonempty, closed, and convex (possibly unbounded). The running cost has the form
\begin{equation}
	\label{eq:quad_u}
	\ell(t,z,u) = \ell_0(t,z) + \frac{1}{2} u^\top R(t) u,
\end{equation}
where $\ell_0(t,\cdot)$ is continuous in $z$ and $R(t)\in\mathbb R^{m\times m}$ satisfies
\[
R(t)\succeq r_{\min} I \quad \text{for some } r_{\min}>0\ \text{and all }t\in[0,T].
\]
\end{assumption}

Assumption~\ref{ass:struct_B} guarantees uniform strong convexity of the Hamiltonian with respect to the control input, ensuring existence and uniqueness of the pointwise optimal control and well-posedness of the minimization problem over the entire time horizon. Intuitively, this condition ensures that control effort is penalized in every direction at all times, so the optimal control cannot be flat, ill-defined, or sensitive to small perturbations.

\begin{assumption}
\label{ass:convex_growth_B}
For almost every $t$ and all relevant $(x,\hat x,\lambda,\hat\lambda)$, the maps
\[
u\mapsto \lambda^\top f(t,x,u),\qquad u\mapsto \hat\lambda^\top \hat f(t,\hat x,u)
\]
are convex on $\mathcal U$, and satisfy a linear growth bound, i.e., there exist locally bounded functions $c_f(t,x),c_{\hat f}(t,\hat x)\ge 0$ such that for all $u\in\mathcal U$,
\[
|\lambda^\top f(t,x,u)| \le c_f(t,x)\,(1+\|u\|),\qquad
|\hat\lambda^\top \hat f(t,\hat x,u)| \le c_{\hat f}(t,\hat x)\,(1+\|u\|).
\]
\end{assumption}

Assumption~\ref{ass:convex_growth_B} imposes a linear-growth condition on the control-dependent terms of the Hamiltonian, ensuring coercivity and preventing unbounded descent even when the admissible control set is unbounded. In simple terms, this condition guarantees that no term in the dynamics or cost can overpower the quadratic control penalty, so the optimization does not “prefer” arbitrarily large control actions.

\begin{lemma}[Existence and uniqueness under quadratic effort]
\label{lem:exist_unique_B}
Under Assumptions~\ref{ass:struct_B}--\ref{ass:convex_growth_B}, for almost every $t$ the pointwise minimization problems
\[
\arg\min_{u\in\mathcal U}\hat H(t,\hat x,u,\hat\lambda),\qquad
\arg\min_{u\in\mathcal U}H(t,x,\hat x,u,\lambda)
\]
admit unique minimizers.
\end{lemma}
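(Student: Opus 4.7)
The plan is to reduce the lemma to Theorem~\ref{thm:exist_unique_minimizers_relaxed} by verifying that Assumptions~\ref{ass:struct_B}--\ref{ass:convex_growth_B} imply the hypotheses of that theorem together with strict (in fact, strong) convexity in $u$. First, Assumption~\ref{ass:struct_B} immediately gives Assumption~\ref{ass:U_relaxed}. It therefore remains to establish that each Hamiltonian, viewed as a function of $u\in\mathcal U$ with the remaining arguments frozen, is proper, lower semicontinuous, convex, coercive, and in fact strongly convex on $\mathcal U$. Strong convexity then delivers uniqueness via the final clause of Theorem~\ref{thm:exist_unique_minimizers_relaxed}.

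For the model-based Hamiltonian $H(t,x,\hat x,u,\lambda)=\ell_0(t,x)+\tfrac12 u^\top R(t)u+\beta(t)\|x-\hat x\|^2+\lambda^\top f(t,x,u)$, the first and third terms are constants in $u$, the quadratic $\tfrac12 u^\top R(t)u$ is $r_{\min}$-strongly convex on $\mathbb R^m$ by the lower bound $R(t)\succeq r_{\min}I$, and $u\mapsto \lambda^\top f(t,x,u)$ is convex on $\mathcal U$ by Assumption~\ref{ass:convex_growth_B}. The sum of a strongly convex function and a convex function is strongly convex, hence strictly convex. Finiteness of each term on $\mathcal U$ gives properness, and since finite convex functions on $\mathbb R^m$ are continuous, they are in particular lower semicontinuous; the same argument applies verbatim to $\hat H(t,\hat x,u,\hat\lambda)$.

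The main step is coercivity, where one must make sure the quadratic penalty dominates the (possibly indefinite) linear-growth term from $\lambda^\top f$. Using $R(t)\succeq r_{\min}I$ and the bound from Assumption~\ref{ass:convex_growth_B},
\begin{equation*}
H(t,x,\hat x,u,\lambda) \;\ge\; \tfrac{r_{\min}}{2}\|u\|^2 \;-\; c_f(t,x)\bigl(1+\|u\|\bigr) \;+\; \ell_0(t,x) \;+\; \beta(t)\|x-\hat x\|^2,
\end{equation*}
and the right-hand side tends to $+\infty$ as $\|u\|\to\infty$ because the quadratic term dominates the linear one. The same estimate with $c_{\hat f}(t,\hat x)$ in place of $c_f(t,x)$ yields coercivity of $\hat H$ on $\mathcal U$.

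With Assumption~\ref{ass:U_relaxed} and Assumption~\ref{ass:convex_coercive} now verified for both Hamiltonians, Theorem~\ref{thm:exist_unique_minimizers_relaxed} guarantees that the two argmin sets are nonempty, closed, and convex for almost every $t$; the strict convexity established above then forces each argmin set to be a singleton, which is the desired existence and uniqueness conclusion. The only genuinely delicate point in the plan is the coercivity bound, since one must prevent the linear growth of $\lambda^\top f$ (whose coefficient depends on the state and costate trajectories) from competing with the quadratic penalty; Assumption~\ref{ass:convex_growth_B} is tailored precisely to rule this out by controlling the growth uniformly in $u$ for fixed $(t,x,\lambda)$.
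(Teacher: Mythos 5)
Your proposal is correct and rests on exactly the same two estimates as the paper's proof: strong convexity of the Hamiltonian in $u$ from $R(t)\succeq r_{\min}I$ plus convexity of $u\mapsto\lambda^\top f(t,x,u)$, and coercivity from the quadratic term dominating the linear growth bound of Assumption~\ref{ass:convex_growth_B}. The only organizational difference is that you delegate the existence step to Theorem~\ref{thm:exist_unique_minimizers_relaxed} after verifying its hypotheses, whereas the paper re-runs the minimizing-sequence/Bolzano--Weierstrass argument inline; this is a matter of packaging, not of mathematical substance.
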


\begin{proof}
Fix $t\in[0,T]$ such that Assumptions~\ref{ass:struct_B}--\ref{ass:convex_growth_B} hold (this is the case for almost every $t$). We prove the claim for the model-based Hamiltonian. The proof for the plant Hamiltonian is identical.

For fixed $(t,x,\hat x,\lambda)$, we define the function
\[
\Psi(u):= H(t,x,\hat x,u,\lambda)
= \ell_0(t,x) + \frac{1}{2}u^\top R(t)u + \beta(t)\|x-\hat x\|^2 + \lambda^\top f(t,x,u),
\qquad u\in\mathcal U.
\]
By Assumption~\ref{ass:convex_growth_B}, the map $u\mapsto \lambda^\top f(t,x,u)$ is convex on $\mathcal U$. Therefore $\Psi$ is convex on $\mathcal U$. Moreover, since $R(t)\succeq r_{\min}I$, the quadratic term is $r_{\min}$-strongly convex on $\mathbb R^m$, hence $\Psi$ is \emph{strongly convex} on $\mathcal U$ as the sum of a strongly convex function and a convex function.

\medskip\noindent
Let $r_{\max}(t):=\|R(t)\|$ and note that
\[
\frac{1}{2}u^\top R(t)u \;\ge\; \frac{r_{\min}}{2}\|u\|^2 \qquad \forall u\in\mathbb R^m.
\]
By Assumption~\ref{ass:convex_growth_B}, there exists $c_f(t,x)\ge 0$ such that
\[
|\lambda^\top f(t,x,u)| \le c_f(t,x)\,(1+\|u\|),\qquad \forall u\in\mathcal U.
\]
Hence, for all $u\in\mathcal U$,
\begin{align*}
	\Psi(u)
	&\ge
	\Big(\ell_0(t,x)+\beta(t)\|x-\hat x\|^2\Big)
	+\frac{r_{\min}}{2}\|u\|^2
	- c_f(t,x)\,(1+\|u\|)\\
	&=
	C(t,x,\hat x) + \frac{r_{\min}}{2}\|u\|^2 - c_f(t,x)\|u\| - c_f(t,x),
\end{align*}
where $C(t,x,\hat x):=\ell_0(t,x)+\beta(t)\|x-\hat x\|^2$ does not depend on $u$. The right-hand side is a quadratic function of $\|u\|$ with positive leading coefficient $\frac{r_{\min}}{2}$; therefore,
\[
\|u\|\to\infty,\ u\in\mathcal U \quad \Longrightarrow \quad \Psi(u)\to +\infty.
\]
Namely, $\Psi$ is coercive on $\mathcal U$.

Let $m^\star:=\inf_{u\in\mathcal U}\Psi(u)$ and let $\{u_k\}\subset\mathcal U$ be a minimizing sequence with $\Psi(u_k)\downarrow m^\star$. Coercivity implies that $\{u_k\}$ is bounded. Otherwise $\|u_k\|\to\infty$ along a subsequence would force $\Psi(u_k)\to+\infty$, contradicting $\Psi(u_k)\downarrow m^\star<+\infty$.

Since $\{u_k\}$ is bounded, there exists a subsequence (not relabeled) and $\bar u\in\mathbb R^m$ such that $u_k\to \bar u$. Because $\mathcal U$ is closed, $\bar u\in\mathcal U$. Finally, $\Psi$ is convex (hence continuous on the relative interior of $\mathcal U$) and, under the present assumptions, lower semicontinuous on $\mathcal U$; thus
\[
\Psi(\bar u)\le \liminf_{k\to\infty}\Psi(u_k)=m^\star,
\]
which yields $\Psi(\bar u)=m^\star$. Therefore, $\bar u$ is a minimizer and the argmin set is nonempty.

Because $\Psi$ is strongly convex on $\mathcal U$, it admits at most one minimizer on $\mathcal U$. Indeed, if $u_1\neq u_2$ were both minimizers, then for any $\theta\in(0,1)$ strong convexity would imply
\[
\Psi(\theta u_1+(1-\theta)u_2)
<
\theta \Psi(u_1) + (1-\theta)\Psi(u_2)
=
m^\star,
\]
a contradiction. Hence, the minimizer is unique.

For fixed $(t,\hat x,\hat\lambda)$ define
\[
\hat\Psi(u):=\hat H(t,\hat x,u,\hat\lambda)
= \ell_0(t,\hat x) + \frac{1}{2}u^\top R(t)u + \hat\lambda^\top \hat f(t,\hat x,u),
\qquad u\in\mathcal U.
\]
Assumption~\ref{ass:convex_growth_B} gives convexity of $u\mapsto \hat\lambda^\top \hat f(t,\hat x,u)$ on $\mathcal U$ and the same linear growth bound, which again yields coercivity and strong convexity of $\hat\Psi$. Therefore, $\hat\Psi$ admits a unique minimizer over $\mathcal U$.

Thus, for almost every $t\in[0,T]$, both pointwise minimization problems admit unique minimizers, and the proof is complete.
\end{proof}

\begin{theorem}[Structural equivalence of optimal controls]
\label{thm:equiv_optionB}
Suppose Assumptions~\ref{ass:struct_B}--\ref{ass:convex_growth_B} hold. Let $(\hat x^*(\cdot),\hat\lambda^*(\cdot),u^*(\cdot))$ satisfy the PMP conditions for the plant problem (Problem~P1), and let $(x^\circ(\cdot),\lambda^\circ(\cdot),u^\circ(\cdot))$ satisfy the PMP conditions for the model-based penalized problem (Problem~P2).

Suppose that, for almost every $t\in[0,T]$,
\begin{equation}
	\label{eq:struct_match_switching}
	\nabla_u \ell(t,\hat x^*(t),u) + \nabla_u\!\big(\hat\lambda^{*\top}(t)\hat f(t,\hat x^*(t),u)\big)
	=
	\nabla_u \ell(t,x^\circ(t),u) + \nabla_u\!\big(\lambda^{\circ\top}(t) f(t,x^\circ(t),u)\big)
	\quad \text{at } u=u^\circ(t),
\end{equation}
and that the state alignment holds:
\begin{equation}
	\label{eq:state_align_B}
	x^\circ(t)=\hat x^*(t)\quad\text{for a.e. }t\in[0,T].
\end{equation}
Then
\[
u^\circ(t)=u^*(t)\quad\text{for a.e. }t\in[0,T].
\]

A sufficient set of verifiable conditions implying \eqref{eq:struct_match_switching} is:
\begin{equation}
	\label{eq:sufficient_B}
	\nabla_u\hat f(t,\hat x^*(t),u)=\nabla_u f(t,\hat x^*(t),u)\ \text{for all }u\in\mathcal U,\ \text{a.e. }t,
	\qquad
	\lambda^\circ(t)=\hat\lambda^*(t)\ \text{a.e.}
\end{equation}
\end{theorem}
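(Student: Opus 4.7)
The plan is to view Theorem~\ref{thm:equiv_optionB} as a specialization of Theorem~\ref{thm:equiv_optionA} in which Assumption~\ref{ass:struct_B} upgrades convexity in $u$ to $r_{\min}$-strong convexity (through the quadratic term $\tfrac{1}{2}u^\top R(t)u$), while Lemma~\ref{lem:exist_unique_B} guarantees unique pointwise minimizers of both Hamiltonians. Because the penalty term $\beta(t)\|x-\hat x\|^2$ is independent of $u$, it contributes nothing to $\nabla_u H$; this is the structural reason the penalty alters the state/costate evolution but not the pointwise minimizer.

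First, I would fix $t$ at which the PMP conditions hold, so that Lemma~\ref{lem:exist_unique_B} identifies $u^*(t)$ and $u^\circ(t)$ as the unique minimizers of $\hat H(t,\hat x^*(t),\cdot,\hat\lambda^*(t))$ and $H(t,x^\circ(t),\hat x^*(t),\cdot,\lambda^\circ(t))$, respectively. The first-order condition of Remark~\ref{rem:VI_relaxed} applied to the model-based problem yields
\[
\bigl\langle \nabla_u H(t,x^\circ(t),\hat x^*(t),u^\circ(t),\lambda^\circ(t)),\, v-u^\circ(t)\bigr\rangle\ge 0,\qquad \forall v\in\mathcal U.
\]
Substituting the gradient-matching hypothesis \eqref{eq:struct_match_switching} on the left replaces $\nabla_u H$ by $\nabla_u \hat H$ evaluated at the same $u^\circ(t)$, which produces the VI
\[
\bigl\langle \nabla_u \hat H(t,\hat x^*(t),u^\circ(t),\hat\lambda^*(t)),\, v-u^\circ(t)\bigr\rangle\ge 0,\qquad \forall v\in\mathcal U.
\]
By convexity of $\hat H$ in $u$ this says $u^\circ(t)\in\arg\min_{u\in\mathcal U}\hat H(t,\hat x^*(t),u,\hat\lambda^*(t))$; strict convexity collapses this argmin to the singleton $\{u^*(t)\}$, forcing $u^\circ(t)=u^*(t)$ for almost every $t\in[0,T]$.

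To verify that the sufficient conditions \eqref{eq:sufficient_B} imply \eqref{eq:struct_match_switching}, I would exploit the fact that under Assumption~\ref{ass:struct_B} the running-cost gradient $\nabla_u \ell(t,z,u)=R(t)u$ is independent of $z$, so the $\nabla_u\ell$ terms on either side of \eqref{eq:struct_match_switching} match automatically and the requirement reduces to
\[
\hat\lambda^{*\top}(t)\,\nabla_u\hat f\bigl(t,\hat x^*(t),u^\circ(t)\bigr)=\lambda^{\circ\top}(t)\,\nabla_u f\bigl(t,x^\circ(t),u^\circ(t)\bigr).
\]
Using the state alignment \eqref{eq:state_align_B} to substitute $x^\circ(t)=\hat x^*(t)$, the Jacobian equality $\nabla_u\hat f(t,\hat x^*(t),u)=\nabla_u f(t,\hat x^*(t),u)$ together with $\lambda^\circ(t)=\hat\lambda^*(t)$ delivers the match.

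The main obstacle is conceptual rather than technical: one must verify that no hidden state-dependence slips into $\nabla_u$ of either Hamiltonian, which is exactly where the quadratic-in-$u$ structure of \eqref{eq:quad_u} is indispensable, since it decouples $\nabla_u\ell$ from the state and so prevents any mismatch between the arguments $\hat x^*(t)$ and $x^\circ(t)$ from leaking into the first-order condition. Care must also be taken to remember that, although the penalty term perturbs the costate dynamics \eqref{eq:model_costate} and thus modifies $\lambda^\circ(\cdot)$ in general, it remains silent in the pointwise $u$-minimization; hence it can only affect the equivalence through the costate hypothesis $\lambda^\circ(t)=\hat\lambda^*(t)$ in \eqref{eq:sufficient_B}, which is precisely what the sufficient conditions assume.
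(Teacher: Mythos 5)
Your proposal is correct and follows essentially the same route as the paper's proof: fix a time $t$, invoke Lemma~\ref{lem:exist_unique_B} for uniqueness of both pointwise minimizers, write the variational inequality characterizing $u^\circ(t)$ for the model Hamiltonian, substitute the gradient-matching hypothesis \eqref{eq:struct_match_switching} to obtain the variational inequality for the plant Hamiltonian at $u^\circ(t)$, and conclude $u^\circ(t)=u^*(t)$ by uniqueness. Your added observation that $\nabla_u\ell(t,z,u)=R(t)u$ is state-independent under Assumption~\ref{ass:struct_B}, which makes the running-cost terms in \eqref{eq:struct_match_switching} match automatically and reduces the sufficient conditions \eqref{eq:sufficient_B} to the dynamics and costate equalities, is a slightly more explicit justification than the paper's ``direct substitution,'' but it is the same argument.
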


\begin{proof}
We prove that $u^\circ(t)=u^*(t)$ for almost every $t\in[0,T]$. The proof is pointwise in time and relies on (i) uniqueness of the pointwise Hamiltonian minimizers (Lemma~\ref{lem:exist_unique_B}) and (ii) the switching-gradient matching condition \eqref{eq:struct_match_switching}.

By Lemma~\ref{lem:exist_unique_B}, under Assumptions~\ref{ass:struct_B}--\ref{ass:convex_growth_B}, for almost every $t\in[0,T]$ the pointwise minimization problems
\[
\arg\min_{u\in\mathcal U}\hat H\big(t,\hat x^*(t),u,\hat\lambda^*(t)\big),
\qquad
\arg\min_{u\in\mathcal U} H\big(t,x^\circ(t),\hat x^*(t),u,\lambda^\circ(t)\big)
\]
admit \emph{unique} minimizers. We fix such a time $t$ and suppress the explicit dependence on $t$ in the notation.

We define the pointwise objective functions
\begin{align}
	\Psi_{\mathrm{act}}(u)
	&:=\hat H\big(\hat x^*,u,\hat\lambda^*\big),
	\label{eq:thm3_Psi_act}\\
	\Psi_{\mathrm{mod}}(u)
	&:= H\big(x^\circ,\hat x^*,u,\lambda^\circ\big).
	\label{eq:thm3_Psi_mod}
\end{align}
Then, by definition of the PMP minimization conditions,
\begin{equation}
	u^*=\arg\min_{u\in\mathcal U}\Psi_{\mathrm{act}}(u),
	\qquad
	u^\circ=\arg\min_{u\in\mathcal U}\Psi_{\mathrm{mod}}(u).
	\label{eq:thm3_argmins}
\end{equation}

\medskip\noindent
Under Assumptions~\ref{ass:struct_B}--\ref{ass:convex_growth_B}, both $\Psi_{\mathrm{act}}$ and $\Psi_{\mathrm{mod}}$ are convex and (by the standing smoothness conditions in the PMP setup) differentiable in $u$. Therefore, the unique minimizer $u^\circ$ of $\Psi_{\mathrm{mod}}$ satisfies the variational inequality
\begin{equation}
	\big\langle \nabla_u \Psi_{\mathrm{mod}}(u^\circ),\, v-u^\circ \big\rangle \ge 0,
	\qquad \forall v\in\mathcal U.
	\label{eq:thm3_VI_mod}
\end{equation}
Similarly, $u^*$ is characterized by the corresponding variational inequality for $\Psi_{\mathrm{act}}$:
\begin{equation}
	\big\langle \nabla_u \Psi_{\mathrm{act}}(u^*),\, v-u^* \big\rangle \ge 0,
	\qquad \forall v\in\mathcal U.
	\label{eq:thm3_VI_act}
\end{equation}

\medskip\noindent
By the definitions of the Hamiltonians,
\begin{align}
	\nabla_u \Psi_{\mathrm{act}}(u)
	&=
	\nabla_u \ell\big(t,\hat x^*(t),u\big)
	+
	\nabla_u\!\Big(\hat\lambda^{*\top}(t)\,\hat f\big(t,\hat x^*(t),u\big)\Big),
	\label{eq:thm3_grad_act}\\
	\nabla_u \Psi_{\mathrm{mod}}(u)
	&=
	\nabla_u \ell\big(t,x^\circ(t),u\big)
	+
	\nabla_u\!\Big(\lambda^{\circ\top}(t)\, f\big(t,x^\circ(t),u\big)\Big),
	\label{eq:thm3_grad_mod}
\end{align}
where we have used that the penalty term $\beta(t)\|x^\circ(t)-\hat x^*(t)\|^2$ does not depend explicitly on $u$ at fixed $(t,x^\circ(t),\hat x^*(t))$, and hence does not contribute to $\nabla_u \Psi_{\mathrm{mod}}$.

From the hypothesis,
\begin{equation}
	\nabla_u \Psi_{\mathrm{act}}(u)\Big|_{u=u^\circ}
	=
	\nabla_u \Psi_{\mathrm{mod}}(u)\Big|_{u=u^\circ}.
	\label{eq:thm3_grad_match}
\end{equation}
(Condition \eqref{eq:state_align_B} ensures that the state arguments appearing in the two gradients are evaluated consistently along the relevant trajectory.)

Substituting \eqref{eq:thm3_grad_match} into the variational inequality \eqref{eq:thm3_VI_mod} yields
\begin{equation}
	\big\langle \nabla_u \Psi_{\mathrm{act}}(u^\circ),\, v-u^\circ \big\rangle \ge 0,
	\qquad \forall v\in\mathcal U.
	\label{eq:thm3_VI_act_at_ucirc}
\end{equation}

\medskip\noindent
Since $\Psi_{\mathrm{act}}$ is convex and differentiable on the closed convex set $\mathcal U$, the variational inequality \eqref{eq:thm3_VI_act_at_ucirc} is \emph{equivalent} to the statement that $u^\circ$ is a minimizer of $\Psi_{\mathrm{act}}$ over $\mathcal U$, i.e.,
\[
u^\circ \in \arg\min_{u\in\mathcal U}\Psi_{\mathrm{act}}(u).
\]
But by Lemma~\ref{lem:exist_unique_B}, this argmin set is a singleton $\{u^*\}$. Therefore,
\[
u^\circ = u^*.
\]

The argument above holds for every $t$ at which the pointwise minimizers are unique and the matching condition \eqref{eq:struct_match_switching} holds. From the hypothesis, \eqref{eq:struct_match_switching} and \eqref{eq:state_align_B} hold for almost every $t$, and by Lemma~\ref{lem:exist_unique_B} uniqueness holds for almost every $t$. Hence,
\[
u^\circ(t)=u^*(t)\quad\text{for a.e. }t\in[0,T],
\]
which completes the proof.

\medskip\noindent
Finally, the sufficient conditions \eqref{eq:sufficient_B} imply \eqref{eq:struct_match_switching} by direct substitution into \eqref{eq:thm3_grad_act}--\eqref{eq:thm3_grad_mod}, since equality of $\nabla_u \hat f$ and $\nabla_u f$ (together with $\lambda^\circ=\hat\lambda^*$ and $x^\circ=\hat x^*$) yields equality of the Hamiltonian gradients at $u=u^\circ(t)$.
\end{proof}

\begin{remark}
The equivalence of the optimal control laws derived from the plant and the model-based problems follows from the fact that both Hamiltonians induce the same variational inequality with respect to the control input. In particular, although the state and costate trajectories generally differ due to model mismatch and the presence of the penalty term, the latter does not depend on the control input and therefore does not affect the subdifferential of the Hamiltonian with respect to the control. As a result, the normal-cone optimality condition associated with the admissible control set is identical for both problems, leading to the same pointwise optimal control.
\end{remark}

\subsection{Illustrative examples}

\subsubsection{Example 1}

This example illustrates the equivalence result of
Theorem~\ref{thm:equiv_optionA} in a simple setting where the running cost is
\emph{nonsmooth}, thereby necessitating the use of subgradients and normal cones
instead of classical gradients.

Consider a scalar control system with admissible control set
\[
\mathcal U = [-1,1],
\]
and a running cost given by
\[
\ell(u) = |u|,
\]
which is convex but not differentiable at $u=0$. For simplicity, suppose that
the control enters linearly in both the plant and the model dynamics, so that
the corresponding Hamiltonians can be written as
\[
\hat H(u) = |u| + \hat a\,u + \beta\,\|x-\hat x\|^2,
\qquad
H(u) = |u| + a\,u,
\]
where \(a,\hat a \in \mathbb{R}\) capture the effect of the costates and the
control-dependent dynamics, and $\beta\ge 0$ is the penalty weight associated
with the model-based formulation.

Note that the penalty term $\beta\,\|x-\hat x\|^2$ is independent of the control
input $u$ and therefore does not affect the pointwise minimization with respect
to $u$ or the associated subdifferential conditions. As a result, it plays no
role in the control optimality conditions examined below.

Since $\ell(u)=|u|$ is nonsmooth at the origin, its subdifferential is given by
\[
\partial |u| =
\begin{cases}
\{1\}, & u>0,\\
[-1,1], & u=0,\\
\{-1\}, & u<0.
\end{cases}
\]
Consequently, the subdifferentials of the plant and model Hamiltonians with
respect to the control are
\[
\partial_u H(u) = \partial |u| + a,
\qquad
\partial_u \hat H(u) = \partial |u| + \hat a.
\]

Suppose there exists $\bar u\in\mathcal U$ such that the subgradient matching
condition of Theorem~\ref{thm:equiv_optionA} holds, namely,
\begin{equation}
\partial_u \hat H(\bar u) = \partial_u H(\bar u),
\label{eq:ex1_subgrad_match}
\end{equation}
and that $\bar u$ also satisfies the normal-cone optimality condition
\begin{equation}
0 \in \partial_u \hat H(\bar u) + N_{\mathcal U}(\bar u).
\label{eq:ex1_normal_cone}
\end{equation}
Then, by Theorem~\ref{thm:equiv_optionA}, $\bar u$ minimizes both the model and
plant Hamiltonians over the admissible control set $\mathcal U$.

In particular, when $\bar u = 0$, the subgradient matching condition
\eqref{eq:ex1_subgrad_match} reduces to the requirement $a=\hat a$, in which case
both Hamiltonians admit $u=0$ as a minimizer. Since $0$ lies in the interior of
$\mathcal U$, the normal-cone condition \eqref{eq:ex1_normal_cone} is
automatically satisfied.

In this example, we highlight that equivalence between model-based and plant-based
optimal controls does not require smooth costs or identical dynamics. Instead,
it relies on alignment of the first-order optimality conditions, expressed here
through subgradient matching and feasibility with respect to the control
constraints, while the penalty term influences the state evolution but not the
instantaneous control minimization.

\subsubsection{Example 2}

Let $\mathcal U=\mathbb R^m$ (not compact) and
\[
\ell(t,z,u)=\ell_0(t,z)+\frac{1}{2}u^\top R(t)u,\qquad R(t)\succeq r_{\min}I.
\]
Consider further that the dependence of $f$ and $\hat f$ on $u$ is such that $u\mapsto \lambda^\top f(t,x,u)$ and $u\mapsto \hat\lambda^\top \hat f(t,\hat x,u)$ are convex and satisfy the linear growth bound in Assumption~\ref{ass:convex_growth_B}. Then each Hamiltonian is coercive and strongly convex in $u$, hence admits a unique minimizer despite $\mathcal U$ being unbounded (Lemma~\ref{lem:exist_unique_B}).

If, in addition, along the relevant trajectories the switching-gradient match \eqref{eq:struct_match_switching} holds (e.g., under \eqref{eq:sufficient_B} and state alignment), then the unique minimizers coincide and $u^\circ(\cdot)\equiv u^*(\cdot)$ almost everywhere, illustrating Theorem~\ref{thm:equiv_optionB}.

The results above characterize when optimal control policies derived from an
approximate model coincide with those of the actual system, both in general
convex settings and under additional structural assumptions that ensure
uniqueness. Next, we illustrate these theoretical findings through a numerical
example that highlights the role of model mismatch, penalty terms, and
Hamiltonian minimization in shaping the resulting control strategies.

\section{Numerical Example -- Equivalent Optimal Controls under Model Mismatch}

In this section, we present a numerical example that demonstrates the
equivalence results developed in Sections~IV.A–IV.C and illustrates how a
penalized model-based formulation can recover the plant-optimal control despite
significant model mismatch.

\subsection{Plant and model dynamics}

We consider scalar systems over a finite horizon $[0,T]$ with $T=6$.
The actual system (plant) is given by
\begin{equation}
\dot{\hat x}(t) = \hat a\,\hat x(t) + \hat b\,u(t),
\qquad \hat x(0)=1.5,
\label{eq:num_plant}
\end{equation}
with
\[
\hat a = 0.3, \qquad \hat b = 1.3.
\]

The available model (digital twin) is
\begin{equation}
\dot x(t) = a\,x(t) + b\,u(t),
\qquad x(0)=1.5,
\label{eq:num_model}
\end{equation}
with
\[
a = -0.6, \qquad b = 0.7.
\]

Thus, both the drift and control effectiveness differ between the plant and the
model, i.e., $a \neq \hat a$ and $b \neq \hat b$.

Next, we specify the performance objective and admissible control constraints used
to evaluate the resulting control strategies.

\subsection{Cost function and constraints}

The plant performance index is
\begin{equation}
J_{\mathrm{act}}(u) =
\int_0^T
\Big(
q\,\hat x(t)^2
+ \tfrac{1}{2} r\,u(t)^2
+ d(t)\,u(t)
\Big)\,dt
+
\tfrac{1}{2} q_T \hat x(T)^2,
\label{eq:num_cost}
\end{equation}
with
\[
q = 0.5, \qquad r = 0.2, \qquad q_T = 2.
\]

The time-varying term
\begin{equation}
d(t) = A\,\mathrm{sign}\!\big(\sin(\omega t)\big),
\qquad
A = 200, \quad \omega = \frac{4\pi}{T},
\label{eq:num_d}
\end{equation}
acts as a known exogenous excitation that induces switching in the optimal
control.

The admissible control set is the compact interval
\begin{equation}
U = [-0.05,\;0.05].
\label{eq:num_U}
\end{equation}

The model-based problem uses the same running and terminal costs, augmented with
a penalty term $\beta(t)\bigl(x(t)-\hat x(t)\bigr)^2$, which does not affect the
pointwise minimization with respect to the control input.

With the dynamics, cost function, and constraints defined, next, we derive the
corresponding Hamiltonians and characterize the resulting optimal control laws
for both the plant and the model-based problems.

\subsection{Hamiltonian minimization and control laws}

Next, we characterize the optimal control laws resulting from the Hamiltonian
minimization of both the actual system (plant) and the available model.

\paragraph{Plant Hamiltonian}
For the actual system with dynamics \eqref{eq:num_plant} and running cost
\[
L(t,\hat x,u) = q\,\hat x^2 + \frac{r}{2}u^2 + d(t)u,
\]
the Hamiltonian is
\[
\hat H(t,\hat x,u,\hat\lambda)
=
q\,\hat x^2 + \frac{r}{2}u^2 + d(t)u
+ \hat\lambda\bigl(\hat a\,\hat x + \hat b\,u\bigr).
\]
Rearranging terms yields
\[
\hat H(t,\hat x,u,\hat\lambda)
=
\frac{r}{2}u^2
+ \bigl(d(t) + \hat b\,\hat\lambda(t)\bigr)u
+ q\,\hat x(t)^2 + \hat a\,\hat\lambda(t)\,\hat x(t),
\]
where the last two terms are independent of the control input $u$.
Since $r>0$, the Hamiltonian is strictly convex in $u$, and the unconstrained
minimizer is obtained by setting $\partial \hat H/\partial u = 0$, yielding
\[
u_{\mathrm{p,uncon}}(t)
=
-\frac{d(t) + \hat b\,\hat\lambda(t)}{r}.
\]

\paragraph{Model Hamiltonian}
Next, we repeat this construction for the model-based problem, which incorporates
a penalty term to account for deviation from the plant trajectory.
The model-based optimal control problem uses the approximate dynamics
\eqref{eq:num_model} and a penalized running cost of the form
\[
L_{\mathrm{m}}(t,x,\hat x,u)
=
q\,x^2 + \frac{r}{2}u^2 + d(t)u
+ \beta(t)\bigl(x - \hat x\bigr)^2,
\]
where $\beta(t)\ge 0$ is a time-varying penalty weight.
The corresponding Hamiltonian is
\[
H(t,x,\hat x,u,\lambda)
=
q\,x^2 + \frac{r}{2}u^2 + d(t)u
+ \beta(t)\bigl(x - \hat x\bigr)^2
+ \lambda\bigl(a\,x + b\,u\bigr).
\]
Rewriting, we obtain
\[
H(t,x,\hat x,u,\lambda)
=
\frac{r}{2}u^2
+ \bigl(d(t) + b\,\lambda(t)\bigr)u
+ q\,x(t)^2
+ \beta(t)\bigl(x(t) - \hat x(t)\bigr)^2
+ a\,\lambda(t)\,x(t).
\]
Importantly, the penalty term $\beta(t)\bigl(x-\hat x\bigr)^2$ does not depend on
$u$ and therefore does not affect the pointwise minimization of the Hamiltonian
with respect to the control input. The unconstrained minimizer of the model
Hamiltonian is thus given by
\[
u_{\mathrm{m,uncon}}(t)
=
-\frac{d(t) + b\,\lambda(t)}{r}.
\]

\paragraph{Constrained control law and equivalence.}
In both cases, the constrained optimal control is obtained by solving, at each
time $t$, the pointwise Hamiltonian minimization problem
\[
u^*(t) \in \arg\min_{u \in U} H(t,\cdot,u,\cdot),
\]
where $U=[u_{\min},u_{\max}]$ is a compact, convex set.
Since both Hamiltonians are strictly convex in $u$, the constrained minimizer is
obtained by projecting the unconstrained minimizer onto $U$, namely,
\[
u^*(t)
=
\Pi_U\!\left(u_{\mathrm{uncon}}(t)\right)
=
\begin{cases}
u_{\min}, & u_{\mathrm{uncon}}(t) < u_{\min},\\[2mm]
u_{\mathrm{uncon}}(t), & u_{\min} \le u_{\mathrm{uncon}}(t) \le u_{\max},\\[2mm]
u_{\max}, & u_{\mathrm{uncon}}(t) > u_{\max}.
\end{cases}
\]

Although the plant and model dynamics differ, the penalty term shapes the state
and costate evolution of the model-based problem without altering the structure
of the Hamiltonian minimization with respect to $u$. As a result, whenever the
projected minimizers coincide, the optimal control trajectories derived from the
model and the plant are identical.

\subsection{Results and discussion}

The resulting control laws allow us to compare, in simulation, the trajectories
generated by the plant-optimal and model-based optimal controls. In particular,
we examine whether the penalized model-based formulation recovers the same
control trajectory as the plant, despite differences in the underlying dynamics.

Figure~\ref{fig:control_equiv_nonconst} shows the optimal control trajectories obtained from the plant Hamiltonian ($u^*(t)$) and the model-based Hamiltonian ($u^\circ(t)$).
The two curves overlap exactly and are non-constant, exhibiting bang--bang behavior with multiple switching times. This figure directly illustrates the conclusion of Theorem~3. Equivalence of optimal control trajectories follows from equivalence of the constrained Hamiltonian minimizers, not from equality of the dynamics.
Figure~\ref{fig:unconstrained_nonconst} shows the unconstrained minimizers $u_{\mathrm{uncon}}^*(t)$ and $u_{\mathrm{uncon}}^\circ(t)$ together with the control bounds and explains \emph{why} Figure~\ref{fig:control_equiv_nonconst} shows equivalence.
Although the gradients of the plant and model Hamiltonians are different (as reflected in the different unconstrained minimizers), the admissible control constraints dominate the pointwise minimization.
Both unconstrained minimizers lie far outside $\mathcal U$ with the same sign, and therefore their projections onto $\mathcal U$ coincide.

From a theoretical perspective, this figure highlights that
\[
\nabla_u \hat H \neq \nabla_u H
\quad \text{while} \quad
\arg\min_{u\in\mathcal U} \hat H = \arg\min_{u\in\mathcal U} H,
\]
which is precisely the mechanism underlying the equivalence results of Section~4.

Figure~\ref{fig:state_nonconst} shows the state trajectories of the plant and the model under the common optimal control. This confirms that equivalence of controls does \emph{not} imply equivalence of state trajectories when the dynamics differ.

\begin{figure}[t]
\centering
\includegraphics[width=0.75\linewidth]{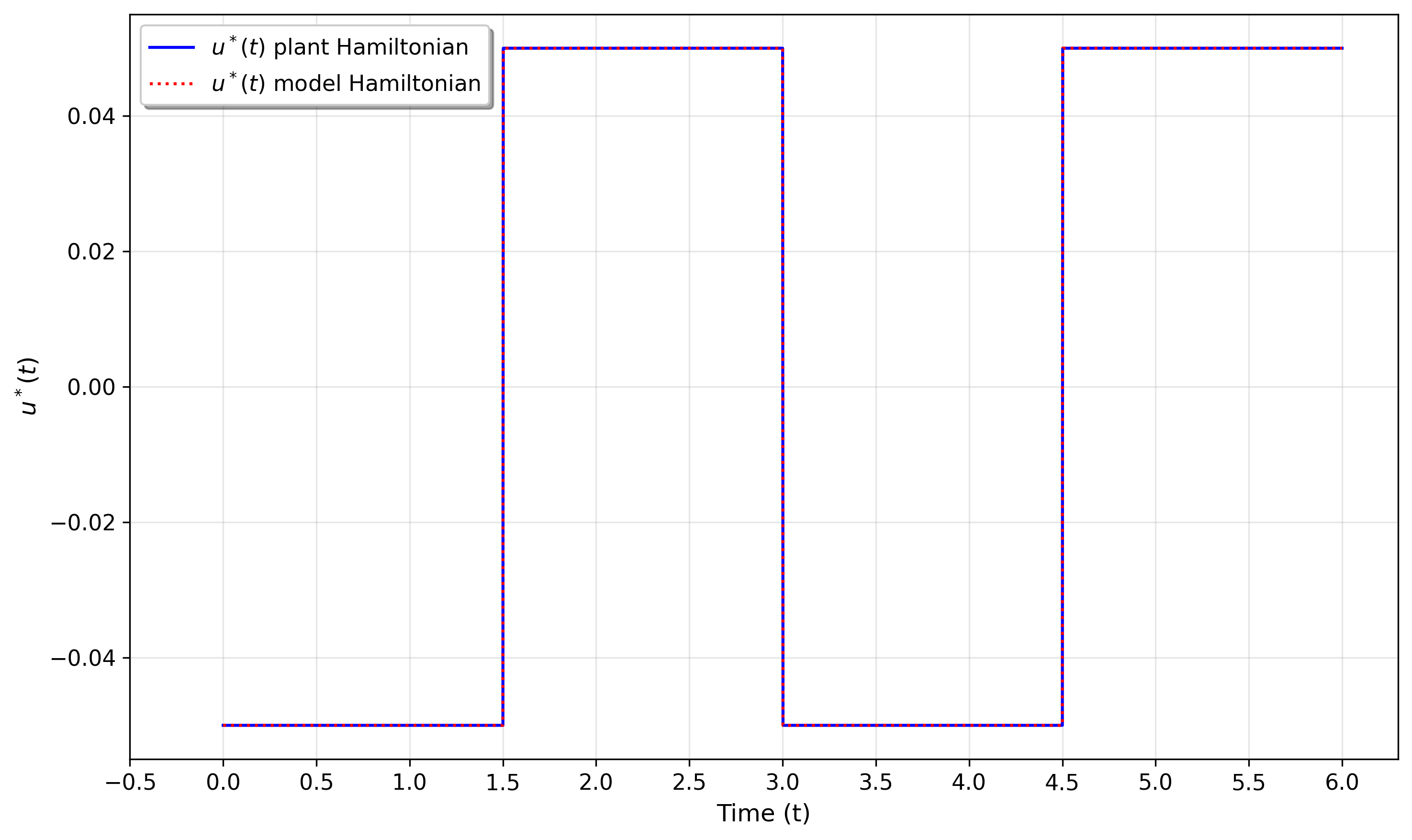}
\caption{Constrained optimal controls $u^*(t)$ (plant) and $u^\circ(t)$ (model). 
	This figure illustrates the equivalence of constrained Hamiltonian minimizers stated in Theorem 3.}
\label{fig:control_equiv_nonconst}
\end{figure}

\begin{figure}[t]
\centering
\includegraphics[width=0.75\linewidth]{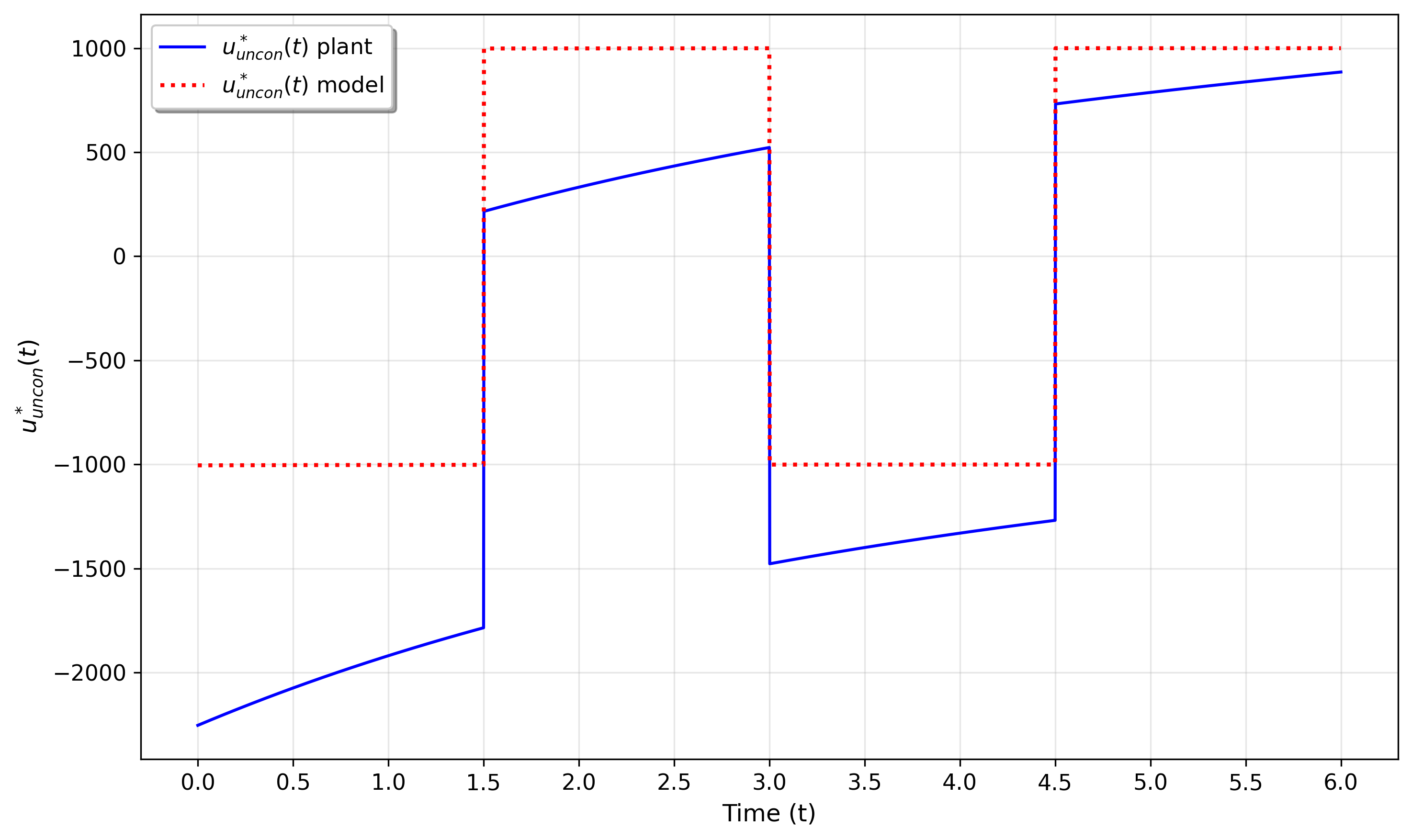}
\caption{Unconstrained Hamiltonian minimizers for the plant and model, together with control bounds. 
	The unconstrained minimizers differ due to model mismatch, but both lie far outside $\mathcal U$ for all $t$.}
\label{fig:unconstrained_nonconst}
\end{figure}

\begin{figure}[t]
\centering
\includegraphics[width=0.75\linewidth]{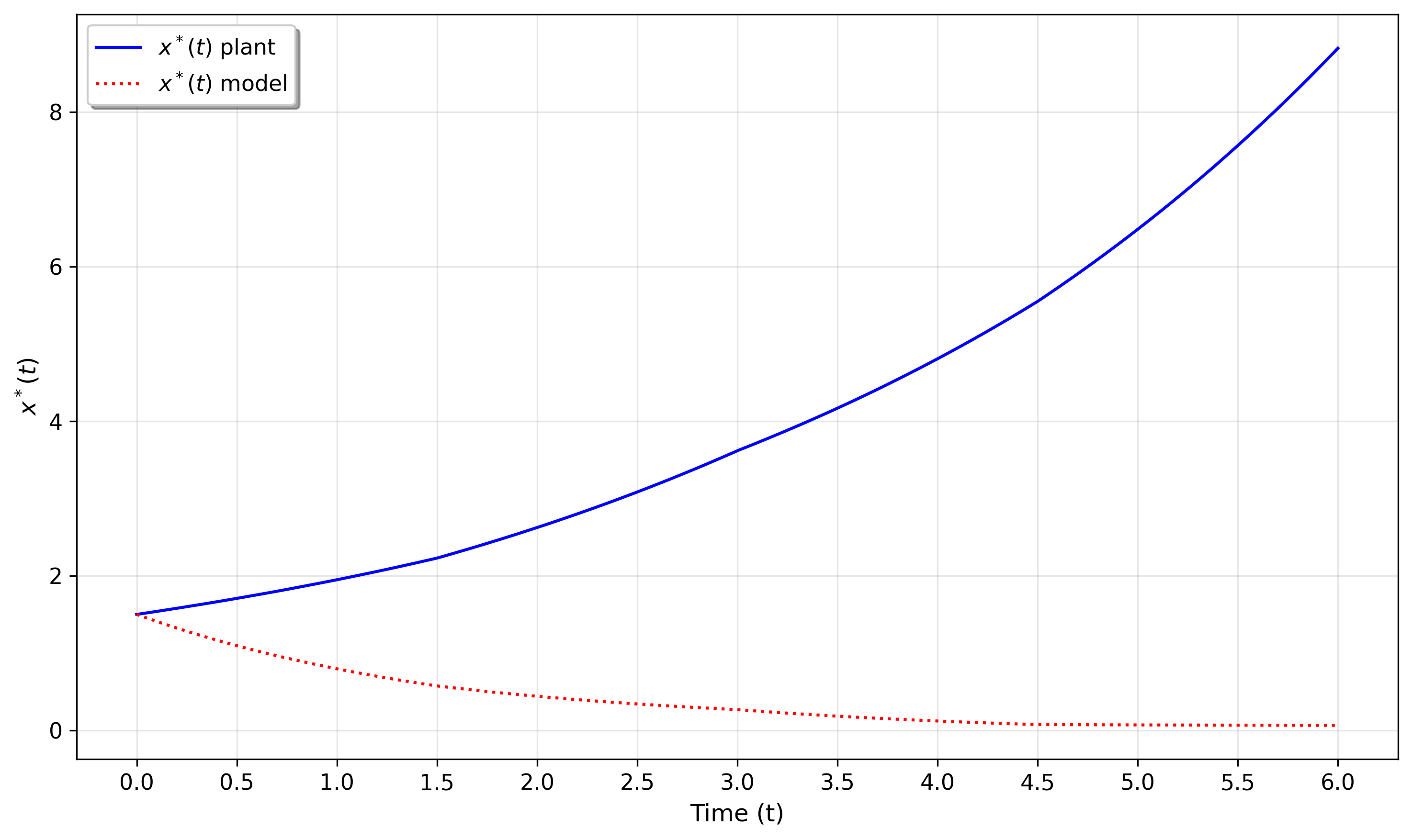}
\caption{State trajectories of the plant and the model under the identical optimal control.
	The trajectories differ due to mismatched dynamics, even though the control is the same.}
\label{fig:state_nonconst}
\end{figure}

This example confirms the theoretical results developed earlier. Even when the
model dynamics differ substantially from those of the plant, the inclusion of a
penalty term can preserve equivalence of the pointwise Hamiltonian minimization
and, consequently, the resulting optimal control. These findings underscore that
accurate control synthesis does not require an exact model, but rather alignment
of the optimality conditions that govern control selection.

From a learning perspective, this suggests that accurate control synthesis does not always require precise recovery of system dynamics. Instead, learning efforts can be focused on identifying regimes in which model-based Hamiltonian minimizers coincide with those of the plant. As a result, the framework provides a theoretical basis for robust model-based decision making in the presence of modeling error, and supports the use of digital twins as effective control surrogates even when they are not dynamically exact.

\section{Concluding Remarks} \label{sec:conclusion}
In this paper, we studied finite-horizon optimal control problems in which an approximate model is used to synthesize control strategies for an actual system with unknown dynamics. We introduced a penalized model-based formulation and analyzed the associated Hamiltonian systems to establish structural conditions under which the constrained Hamiltonian minimizers of the model-based and plant-based problems coincide. As a result, we showed that the optimal control trajectories derived from the available model are identical to those of the actual system, despite model mismatch.

A key insight of our analysis is that the penalty term capturing model–plant mismatch influences the state and costate evolution but does not enter explicitly in the pointwise minimization of the Hamiltonian with respect to the control input. This observation allows us to decouple questions of model accuracy from control optimality and provides a principled explanation for why approximate models and digital twins can successfully generate optimal control strategies in practice. The numerical examples further illustrate that the equivalence of optimal controls can hold even when state trajectories differ and when the optimal control is non-constant.

The results of this paper suggest a shift in perspective for learning-based control. Rather than focusing on exact system identification, learning efforts can be directed toward preserving the structural properties that determine Hamiltonian minimization. Ongoing work explores implementing this analysis in stochastic systems. A potential direction for future research should explore extensions to decentralized and partially observed systems, as well as the integration of online learning mechanisms for adapting the penalty structure in real time.

\section{Acknowledgments}
This research was supported in part by NSF under Grants CNS-2401007, CMMI-2348381, IIS-2415478, and in part by MathWorks.

The author would like to thank Xiaoxing Ren for helping in producing the plots.

\bibliographystyle{elsarticle-num}
\bibliography{TAC_learn_Andreas,IDS,TAC2_learn,TAC_Ref_Andreas,TAC_Ref_structure}

%% Authors are advised to use a BibTeX database file for their reference list.
%% The provided style file elsarticle-num.bst formats references in the required Procedia style

%% For references without a BibTeX database:

% \begin{thebibliography}{00}

%% \bibitem must have the following form:
%%   \bibitem{key}...
%%

% \bibitem{}

% \end{thebibliography}

\end{document}